\documentclass[11pt]{amsart}

\usepackage{amssymb}
\usepackage{amsthm}
\usepackage{amsmath} 
\usepackage{amsbsy}
\usepackage{hyperref}
\usepackage{tikz}
\usepackage{enumerate}
\usepackage[margin=1.4in]{geometry}

\newtheorem{theorem}{Theorem}

\newtheorem{proposition}[theorem]{Proposition}

\newtheorem*{question}{Question}

\begin{document}

\title[]{The smoothest average: \\Dirichlet, Fej\'er and Chebyshev} 
\keywords{Averaging operator, Dirichlet kernel, Fej\'er kernel, Chebyshev polynomials.}
\subjclass[2010]{33C20, 42A38, 65D10} 


\author[]{Noah Kravitz}
\address{Grace Hopper College, Zoom University at Yale, New Haven, CT 06511}
\email{noah.kravitz@yale.edu}

\author[]{Stefan Steinerberger}
\address{Department of Mathematics, University of Washington, Seattle, WA 98195}
\email{steinerb@uw.edu}

\begin{abstract} We are interested in the ``smoothest'' averaging that can be achieved by convolving functions $f \in \ell^2(\mathbb{Z})$  with an averaging function $u$.  More precisely, suppose $u:\{-n, \ldots, n\} \to \mathbb{R}$ is a symmetric function normalized to $\sum_{k=-n}^{n}u(k) = 1$. We show that every convolution operator is not-too-smooth, in the sense that
$$\sup_{f \in \ell^2(\mathbb{Z})} \frac{\| \nabla (f*u)\|_{\ell^2(\mathbb{Z})}}{\|f\|_{\ell^2}}\geq \frac{2}{2n+1},$$
and we show that equality holds if and only if $u$ is constant on the interval $\{-n, \ldots, n\}$.  In the setting where smoothness is measured by the $\ell^2$-norm of the discrete second derivative and we further restrict our attention to functions $u$ with nonnegative Fourier transform, we establish the inequality
$$\sup_{f \in \ell^2(\mathbb{Z})} \frac{\| \Delta (f*u)\|_{\ell^2(\mathbb{Z})}}{\|f\|_{\ell^2(\mathbb{Z})}} \geq \frac{4}{(n+1)^2},$$
with equality if and only if $u$ is the triangle function $u(k)=(n+1-|k|)/(n+1)^2$. We also discuss a continuous analogue and several open problems.
\end{abstract}
\maketitle

\section{Introduction and results}
\subsection{Smooth averaging} It is often desirable to obtain ``smoothed'' local averages of a function $f$ from either $\mathbb{Z}$ or $\mathbb{R}$ to $\mathbb{C}$.  Think, for instance, of the performance of an athlete over a certain period of time, or the value of a stock during a three-hour window in the afternoon: erratic local behavior (noise) in the raw data can conceal longer-term trends.  One natural way to reduce noise is to replace each value of $f$ with a weighted average of the nearby values of $f$.
Inspired by the axiomatic approach in game theory (see, e.g., \cite{har, nash, shapley}), one can ask if there is a particularly canonical way of obtaining such smooth averages.  Certainly, it is natural to require that the averaging process be invariant under translation (so that the process behaves the same everywhere). We should also like the averaging process to preserve overall size ($\ell^1$-mass in the discrete setting and $L^1$-mass in the continuous setting). These considerations suggest that we should average $f$ by convolving it with a (fixed) symmetric function $u$ that has the normalization $\sum_{k \in \mathbb{Z}} u(k)= 1$ or $\int_{\mathbb{R}} u(x) dx = 1$.  Moreover, we may wish to restrict the scale on which the averaging is done by either fixing a higher moment of $u$ or bounding its support.
But which function $u$ should one choose? The characteristic function on the desired length scale is certainly a classical choice, as is a Gaussian. In a certain sense, there is no single ``right'' answer---it is a matter of taste.  But there can be a right answer if we list additional desirable properties of $u$.
\begin{question}\label{q:main}
Which properties of an averaging kernel $u$ (and the resulting averages $f*u$) are desirable?  Which functions $u$ best satisfy these properties? 
\end{question}
\noindent This question has received significant attention in the context of image processing (e.g., \cite{babaud, hogan, linde, linde2, steini, vickrey, yu}).  The second author \cite{steini2} recently proposed a particular approach for the continuous setting: we can measure the smoothness of a function by the $L^2$-norm of its derivative and then ask which $u$ best uniformly minimizes the quantity $\| \nabla (f*u)\|_{L^2}$ (relative to $\|f\|_{L^2}$).  The investigation of this question led to new uncertainty principles for the Fourier transform, families of conjectured optimal averaging kernels, and interesting sign patterns in the hypergeometric function $_2F_1$.\\
The main purpose of the present paper is to consider analogous discrete problems.  Our results can thus be interpreted in the context of sharp uncertainty-type principle in harmonic analysis (in a similar flavor as, e.g., \cite{amrein, babenko, beckner, beckner2, bene, bene2, bened, bourgain, cohn, cow, folland,gon3, hirsch}).  They are also related to recent advances on other convolution-type inequalities in the discrete setting (e.g., \cite{car, ion, mag, stein}).

\subsection{Two sharp inequalities.}
We now discuss two inequalities that provide answers to our Question for certain natural notions of smoothness. Throughout this subsection, take $u:\left\{-n, \dots, n\right\} \to \mathbb{R}$ to be a symmetric (even) function with the normalization $$ \sum_{k=-n}^n {u(k)} = 1,$$
and suppose that we smooth a function $f \in \ell^2(\mathbb{Z})$ by convolving it with $u$ (and using $f * u$ as the smoothed average). 
First, suppose we measure smoothness of a function by the $\ell^2$-norm of its discrete derivative, which is given by $(\nabla f)(k)=f(k+1)-f(k)$.  So we wish to find $u$ that uniformly minimizes $\| \nabla (f*u) \|_{\ell^2}$ over all choices of $f$ with fixed $\|f\|_{\ell^2}$.  This problem turns out to have a particularly clean solution.

\begin{theorem}\label{thm:first-deriv}
Let $u:\left\{-n, \dots, n\right\} \to \mathbb{R}$ be a symmetric function with normalization $\sum_{k=-n}^n{u(k)}=1$. Then we have the inequality
$$\sup_{0 \neq f \in \ell^2(\mathbb{Z})}  \frac{\| \nabla (f*u)\|_{\ell^2(\mathbb{Z})}}{\|f\|_{\ell^2}} \geq \frac{2}{2n+1},$$
with equality if and only if $u$ is the constant function $u(k)=1/(2n+1)$.
\end{theorem}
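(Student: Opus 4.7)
The natural approach is to diagonalize everything via the discrete-time Fourier transform $\hat f(\xi) = \sum_{k} f(k)e^{-ik\xi}$. Since $\widehat{\nabla f}(\xi) = (e^{-i\xi}-1)\hat f(\xi)$ has modulus $2|\sin(\xi/2)||\hat f(\xi)|$ and $\widehat{f*u}=\hat u \cdot \hat f$, Plancherel's theorem turns the problem into the identity
\[
\sup_{0 \neq f \in \ell^2(\mathbb{Z})} \frac{\|\nabla(f*u)\|_{\ell^2}}{\|f\|_{\ell^2}} \;=\; \sup_{\xi \in [-\pi,\pi]} 2|\sin(\xi/2)\hat u(\xi)|,
\]
with the supremum achieved in the limit by $\hat f$ concentrating sharply near a maximizer. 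The task reduces to showing $\|2\sin(\xi/2)\hat u(\xi)\|_{L^\infty} \geq 2/(2n+1)$.

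Next I would make a polynomial change of variables. Since $u$ is real, even, and supported in $\{-n,\ldots,n\}$, we may write $\hat u(\xi) = P(\cos\xi)$ for a real polynomial $P$ of degree at most $n$ with $P(1) = \hat u(0) = 1$. Substituting $t = \sin(\xi/2) \in [-1,1]$ so that $\cos\xi = 1-2t^2$, the function to be estimated becomes
\[
Q(t) \;:=\; 2tP(1-2t^2),
\]
an odd polynomial of degree at most $2n+1$ satisfying $Q'(0) = 2P(1) = 2$. Thus I must prove the Chebyshev-type extremal inequality: every odd polynomial $Q$ with $\deg Q \leq 2n+1$ and $Q'(0)=2$ obeys $\max_{[-1,1]}|Q| \geq 2/(2n+1)$.

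The candidate extremizer is $Q^*(t) = \frac{2(-1)^n}{2n+1}T_{2n+1}(t)$, for which $T_{2n+1}'(0) = (-1)^n(2n+1)$ gives $Q^*{}'(0) = 2$ and $\|Q^*\|_\infty = 2/(2n+1)$. The proof of optimality is by an alternation/counting argument. Suppose $\|Q\|_\infty < 2/(2n+1)$; set $D = Q^* - Q$, which is odd with $D'(0)=0$ and $\deg D \leq 2n+1$, so the zero of $D$ at the origin has multiplicity at least $3$. At the $2n+2$ Chebyshev extrema $t_k = \cos(k\pi/(2n+1))$ the value of $D$ inherits the alternating sign of $Q^*(t_k) = (-1)^{n+k}/(2n+1)$ (strictly, by the assumed strict inequality), so $D$ has at least $2n+1$ genuine sign changes and hence at least $2n+1$ distinct zeros in the open intervals $(t_{k+1},t_k)$. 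One of these intervals is $(t_{n+1},t_n) \ni 0$; counting the zero at $0$ with its multiplicity of at least $3$ gives at least $2n+3$ zeros with multiplicity, contradicting $\deg D \leq 2n+1$.

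For the equality case I would unwind the substitutions: $Q = Q^*$ via the identity $T_{2n+1}(\sin(\xi/2)) = (-1)^n\sin((2n+1)\xi/2)$ yields $\hat u(\xi) = \sin((2n+1)\xi/2)/[(2n+1)\sin(\xi/2)]$, which is $1/(2n+1)$ times the Dirichlet kernel; its inverse transform is $u(k) = 1/(2n+1)$ on $\{-n,\ldots,n\}$. The step I expect to require the most care is the rigidity in the Chebyshev-type extremal problem: because the constraint $Q'(0) = 2$ is a linear functional rather than a leading-coefficient normalization, the equality-case analysis must handle weak alternation (some $D(t_k) = 0$) and show that any extremal $Q$ still has $\deg D$ forced to be small enough that $D \equiv 0$. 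The Fourier reformulation and the change of variables are essentially bookkeeping.
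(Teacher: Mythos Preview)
Your argument is correct and follows the same overall arc as the paper: a Plancherel reduction to $\sup_\xi |e^{i\xi}-1|\,|\widehat u(\xi)|$, a substitution turning this into a Chebyshev-type extremal problem, and an alternation argument against $T_{2n+1}$. The difference lies in how the polynomial problem is packaged. The paper substitutes $x=\cos\xi$ and squares, obtaining $\max_{[-1,1]}(1-x)p_u(x)^2$ with $p_u(1)=1$; it then proves the more general Theorem~4 (for $(1-x)p(x)$ with $p\geq 0$ of degree $\leq 2n$) by factoring $p-g_{2n}=(1-x)q$ and counting sign changes of the degree-$(2n-1)$ polynomial $q$ at the $2n+1$ alternation points of $(1-x)g_{2n}$. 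You instead substitute $t=\sin(\xi/2)$ and keep the problem linear, landing on odd $Q$ of degree $\leq 2n+1$ with $Q'(0)=2$; your alternation argument then exploits the triple zero of $D=Q^*-Q$ at the origin to push the root count past $2n+1$. The paper's route has the advantage that its intermediate Theorem~4 simultaneously yields Theorem~2 (the Laplacian/Fej\'er result), so one alternation argument does double duty; your formulation is perhaps more transparent as a V.~Markov--type inequality (constraint on $Q'(0)$) and avoids the detour through squaring and the identity $h_n^2=g_{2n}$. Your caution about the equality case is well placed: with only weak sign information at the $t_k$, the root count needs the usual bookkeeping (zeros at the $t_k$ themselves, possible even multiplicities), but this is routine and the paper's own alternation proof handles the analogous issue in the same informal way.
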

\noindent This sharp inequality demonstrates a sense in which averaging in the classical way (i.e., with an unweighted mean of nearby values) is a reasonable strategy: it minimizes the worst-case first-order oscillation in the smoothed function.  Theorem~\ref{thm:first-deriv} also lends support to a conjecture in \cite{steini2} that the characteristic function is extremal for a related continuous problem. 
Suppose we measure smoothness of a function instead by the $\ell^2$-norm of its discrete Laplacian (second derivative), which is given by $(\Delta f)(k) =(\nabla^2 f)(k)=f(k+2)-2f(k+1)+f(k)$. So we wish to find $u$ that uniformly minimizes $\| \Delta (f*u) \|_{\ell^2}$ over all choices of $f$ with fixed $\|f\|_{\ell^2}$.  This problem appears to be difficult in general (as discussed below), but we can obtain a solution if we additionally require $u$ to have nonnegative Fourier transform.
\begin{theorem}\label{thm:laplacian}
Let $u:\left\{-n, \dots, n\right\} \to \mathbb{R}$ be a symmetric function with normalization $\sum_{k=-n}^n {u(k)}=1$ and nonnegative Fourier transform. Then we have the inequality
$$\sup_{0 \neq f \in \ell^2(\mathbb{Z})}  \frac{\| \Delta (f*u)\|_{\ell^2(\mathbb{Z})}}{\|f\|_{\ell^2}} \geq \frac{4}{(n+1)^2},$$
with equality if and only if $u$ is the triangle function $u(k)=(n+1-|k|)/(n+1)^2$.
\end{theorem}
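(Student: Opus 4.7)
The plan is to pass to the Fourier side and reduce to a Chebyshev-type extremal problem for polynomials on the unit circle. Since the symbol of $\Delta$ on $\ell^2(\mathbb{Z})$ is $(e^{i\xi}-1)^2$, with modulus $4\sin^2(\xi/2)$, Plancherel immediately gives
\[
\sup_{0 \neq f \in \ell^2(\mathbb{Z})} \frac{\|\Delta(f*u)\|_{\ell^2}}{\|f\|_{\ell^2}} \;=\; \sup_{\xi} 4 \sin^2(\xi/2)\, \widehat{u}(\xi),
\]
where $\widehat{u}(\xi) = \sum_{k=-n}^{n} u(k) e^{-ik\xi}$ is nonnegative by hypothesis. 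Since $\widehat{u}$ is a nonnegative trigonometric polynomial of degree $\le n$ with $\widehat{u}(0) = 1$, the Fej\'er--Riesz theorem lets us write $\widehat{u}(\xi) = |Q(e^{i\xi})|^2$ for a polynomial $Q(z) = \sum_{k=0}^{n} a_k z^k$; normalization forces $|Q(1)| = 1$, so after a harmless rotation we assume $Q(1) = 1$. Setting $S(z) := (z - 1)Q(z)$, so that $\deg S \le d := n + 1$, $S(1) = 0$, and $S'(1) = Q(1) = 1$, the identity $4\sin^2(\xi/2)\,\widehat{u}(\xi) = |S(e^{i\xi})|^2$ reduces the theorem to the claim that $\|S\|_{L^\infty(|z|=1)} \ge 2/d$ for every polynomial $S$ of degree $\le d$ with $S(1) = 0$ and $S'(1) = 1$. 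The anticipated extremal is $S_0(z) = (z^d - 1)/d$, which unwinds to $Q(z) = (1 + z + \cdots + z^n)/(n+1)$ and the triangle function $u(k) = (n + 1 - |k|)/(n + 1)^2$.

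For the lower bound, the key step is Lagrange interpolation at the $d$-th roots of $-1$, namely $\zeta_k = e^{i(2k+1)\pi/d}$ for $k = 0, \dots, d-1$; these are precisely the points on $|z| = 1$ where $|S_0|$ saturates its maximum $2/d$. Applying the Lagrange formula to the polynomial $Q = S/(z - 1)$ (of degree $\le d - 1$) at these $d$ nodes, and using the factorization $\prod_j (z - \zeta_j) = z^d + 1$ together with the simplification $(\zeta_k - 1)^2 = -4\sin^2((2k+1)\pi/(2d))\,\zeta_k$, the representation $Q(1) = 1$ collapses to
\[
1 \;=\; -\frac{1}{2d}\sum_{k=0}^{d-1}\frac{S(\zeta_k)}{\sin^2((2k+1)\pi/(2d))}.
\]
The triangle inequality, combined with the classical identity $\sum_{k=0}^{d-1} 1/\sin^2((2k+1)\pi/(2d)) = d^2$ (provable via the logarithmic derivative of the Chebyshev polynomial $T_d$ at $\pm 1$), then yields $1 \le (d/2)\,\|S\|_\infty$, i.e., $\|S\|_\infty \ge 2/d$, as needed.

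For the equality case, tightness of the triangle inequality in the argument above forces $|S(\zeta_k)| = \|S\|_\infty = 2/d$ with a common phase across all $k$; substituting back into the identity then pins down $S(\zeta_k) = -2/d$ for every $k$. Consequently $S(z) + 2/d$ is a polynomial of degree $\le d$ vanishing at all $d$ roots of $z^d + 1$, so it must be a scalar multiple of $z^d + 1$; the condition $S(1) = 0$ forces $S(z) = (z^d - 1)/d$, and unwinding gives exactly the triangle function $u$. The main obstacle is recognizing that the correct quadrature nodes are the $d$-th roots of $-1$ (rather than, say, the $d$-th roots of unity) --- the hint being that this is where the conjectural extremal saturates its $L^\infty$ norm; once the nodes are chosen, the argument becomes essentially a single triangle-inequality application together with a standard trigonometric identity.
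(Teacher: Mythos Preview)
Your argument is correct and takes a genuinely different route from the paper's. After the common Plancherel reduction to $\sup_\xi 4\sin^2(\xi/2)\,\widehat u(\xi)$, the paper substitutes $x=\cos\xi$, rewrites $\widehat u(\xi)=p_u(\cos\xi)$ in the Chebyshev basis, and reduces to its Theorem~4: for any polynomial $p$ of degree $\le n$ that is nonnegative on $[-1,1]$ with $p(1)=1$, one has $\max_{[-1,1]}(1-x)p(x)\ge 2/(n+1)^2$, with equality only for $p=g_n$. That theorem is proved by the classical Chebyshev sign-change/interlacing argument, comparing $p$ to $g_n$ at the alternation points of $1-T_{n+1}$.

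You instead stay on the unit circle: Fej\'er--Riesz gives $\widehat u=|Q|^2$ with $\deg Q\le n$ and $Q(1)=1$, and setting $S(z)=(z-1)Q(z)$ you recover the bound $\|S\|_{L^\infty(|z|=1)}\ge 2/(n+1)$ via Lagrange interpolation of $Q$ at the $(n+1)$-st roots of $-1$, the identity $\sum_{k=0}^{d-1}\csc^2\!\bigl((2k+1)\pi/(2d)\bigr)=d^2$, and a single triangle inequality. The equality analysis then pins down $S(z)=(z^{n+1}-1)/(n+1)$ directly. Your method avoids the cosine substitution and the sign-counting argument entirely, at the cost of invoking Fej\'er--Riesz; in return it makes the extremizer appear very naturally as the Dirichlet polynomial $Q(z)=(1+z+\cdots+z^n)/(n+1)$, and the whole proof compresses to one quadrature identity. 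The paper's route is more elementary (no spectral factorization needed) and isolates the independently interesting interval inequality Theorem~4; yours is arguably more direct once the interpolation nodes are identified.
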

\noindent This inequality shows that convolving with any admissible kernel will, at least for one fuction $f \in \ell^2(\mathbb{Z})$, result in larger second derivatives than convolving with triangle function would; in this precise sense, the triangle function serves as the smoothest average for second derivatives.
We remark that we do not lose very much in either of these theorems by assuming that $u$ is symmetric: indeed, one can use the Triangle Inequality to show that, for each fixed $f$, the symmetrization $(u(k)+u(-k))/2$ performs at least as well as the original $u(k)$.  Our proofs will also show that these two theorems continue being sharp if we limit our attention to real-valued functions $f$ (as would be the case in many natural applications).
 

\subsection{Chebyshev polynomials as extremizers}

The $n$-th \emph{Chebyshev polynomial} (of the first kind) is the unique polynomial $T_n(x)$ of degree $n$ such that 
$$T_n(\cos(\xi))=\cos(n\xi).$$  Chebyshev polynomials are known to provide a solution to the problem of finding the monic polynomial of degree $n$ with the smallest possible deviation on an interval.

\begin{theorem}[Chebyshev \cite{cheb}]\label{thm:folklore}
For every monic polynomial $p(x)$ of degree $n$, we have the inequality
$$\max_{x \in [-1,1]} |p(x)| \geq 2^{1-n},$$
with equality if and only if $p(x)=2^{1-n}\cdot T_n(x)$.
\end{theorem}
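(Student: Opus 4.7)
The plan is to run the classical equioscillation argument, using the normalized Chebyshev polynomial $P_n(x) := 2^{1-n} T_n(x)$ as the candidate extremizer and comparing an arbitrary monic competitor $p$ of degree $n$ to $P_n$ via the auxiliary difference $q := P_n - p$. First I would verify the two structural facts about $T_n$ that I need. From the defining identity $T_n(\cos\xi) = \cos(n\xi)$ and the product-to-sum formula $\cos((n+1)\xi) + \cos((n-1)\xi) = 2\cos\xi\cos(n\xi)$, one obtains the three-term recursion $T_{n+1}(x) = 2x\,T_n(x) - T_{n-1}(x)$ with $T_0 = 1$ and $T_1 = x$; by induction, the leading coefficient of $T_n$ is $2^{n-1}$, so $P_n$ is monic of degree $n$. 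Moreover $|T_n(x)| \le 1$ on $[-1,1]$, with equality precisely at the $n+1$ equioscillation nodes $x_k := \cos(k\pi/n)$, where $T_n(x_k) = \cos(k\pi) = (-1)^k$, and hence $P_n(x_k) = (-1)^k\, 2^{1-n}$.

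For the strict inequality, suppose for contradiction that some monic polynomial $p$ of degree $n$ satisfies $\max_{x \in [-1,1]} |p(x)| < 2^{1-n}$. Then $q = P_n - p$ has degree at most $n-1$, since the two monic leading terms cancel. At each node, $q(x_k) = (-1)^k 2^{1-n} - p(x_k)$, and the strict bound $|p(x_k)| < 2^{1-n}$ forces $\operatorname{sgn} q(x_k) = (-1)^k$ for every $k$. The Intermediate Value Theorem then yields at least one zero of $q$ in each of the $n$ open intervals $(x_{k+1}, x_k)$, producing $n$ distinct zeros of a polynomial of degree $\le n-1$ --- a contradiction.

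For the equality case, suppose $p$ is monic of degree $n$ with $\|p\|_{L^\infty[-1,1]} = 2^{1-n}$, and aim to show $p = P_n$. Again set $q = P_n - p$ of degree $\le n-1$; now $(-1)^k q(x_k) = 2^{1-n} - (-1)^k p(x_k) \ge 2^{1-n} - |p(x_k)| \ge 0$, so the strict sign alternation at the nodes relaxes to a weak one. This is where I expect the main obstacle: if $q$ vanishes at some nodes, the IVT alone does not produce $n$ zeros. The key observation is that if $q(x_k) = 0$ at an interior node $1 \le k \le n-1$, then $p(x_k) = (-1)^k 2^{1-n}$, so $p^2$ attains its interior maximum at $x_k$, forcing $p'(x_k) = 0$; combined with $T_n'(x_k) = 0$ (which follows from $T_n'(\cos\xi) = n\sin(n\xi)/\sin\xi$ vanishing at $\xi = k\pi/n$ for interior $k$), this gives $q'(x_k) = 0$, so $x_k$ is a double root of $q$. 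Assuming $q \not\equiv 0$, a careful accounting of (i) simple roots produced by IVT in the open intervals between consecutive nodes where $q$ takes strictly opposite signs, (ii) simple roots at any boundary nodes $x_0, x_n$ where $q$ vanishes, and (iii) double roots at any interior nodes where $q$ vanishes, should yield at least $n$ roots of $q$ counted with multiplicity, contradicting $\deg q \le n-1$. This forces $q \equiv 0$, i.e., $p = P_n = 2^{1-n} T_n$, completing the proof.
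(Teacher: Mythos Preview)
The paper does not actually prove this theorem: it is stated as Chebyshev's classical result and cited without proof. What the paper \emph{does} prove is Theorem~\ref{thm:main}, explicitly described there as ``a modification of the standard proof for Chebyshev's well-known Theorem~\ref{thm:folklore}.'' Your proposal is precisely that standard equioscillation proof, so it aligns exactly with the argument the paper invokes and adapts.

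Your argument is correct. The strict-inequality part is clean. For the equality case, your observation that an interior node with $q(x_k)=0$ forces $q'(x_k)=0$ (because both $p$ and $T_n$ have vanishing derivative at an interior extremum) is the right idea, and the root count goes through: between any two consecutive nodes at which $q$ is nonzero the signs differ, forcing a sign change and hence an odd-multiplicity root in the open interval, while each intermediate interior node contributes a root of multiplicity at least two; summing over the gaps and adding boundary-node zeros gives at least $n$ roots with multiplicity, contradicting $\deg q\le n-1$. It is worth noting that the paper's own proof of Theorem~\ref{thm:main} handles the analogous step more loosely (it speaks of ``$n$ sign changes'' from weak inequalities $q\ge 0$, $q\le 0$ without the double-root analysis), so your treatment of the equality case is in fact more careful than the paper's adaptation.
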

\noindent We will require a variant of this problem in which one considers candidate polynomials with fixed sum of coefficients (instead of fixed leading coefficient). 
 Our main result says that this problem is solved by a simple modification of the Chebyshev polynomials.  We introduce the function
$$g_n(x)=\frac{1}{(n+1)^2} \cdot \frac{1-T_{n+1}(x)}{1-x}$$
and record the following observations: 
\begin{enumerate}
\item The function $g_n$ is a polynomial of degree $n$ since $$1-T_{n+1}(1)=1-T_{n+1}(\cos(0))=1-\cos(n \cdot 0)=0$$ and hence we can factor $(1-x)$ out of $1-T_{n+1}(x)$.
\item  The function $g_n(x)$ is nonnegative on $[-1,1]$ since $|T_{n+1}(x)| \leq 1$ on this interval.
\item We find that $g_n(1)=1$ by making the substitution $x=\cos(\xi)$ and computing
$$\lim_{x \to 1} \frac{1-T_{n+1}(x)}{1-x}=\lim_{\xi \to 0} \frac{1-\cos((n+1) \xi)}{1-\cos(\xi)}=(n+1)^2.$$
\end{enumerate}

\begin{theorem}\label{thm:main}
Let $p(x)$ be a polynomial of degree at most $n$ that is nonnegative on $[-1,1]$ and satisfies $p(1)=1$.  Then we have the inequality
$$\max_{x \in [-1,1]} (1-x)p(x) \geq \frac{2}{(n+1)^2},$$
with equality if and only if $p(x)=g_n(x)$.
\end{theorem}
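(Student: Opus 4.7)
The plan is to reduce Theorem~\ref{thm:main} to the classical Markov inequality via a simple rescaling. Set $q(x) := (1-x)\,p(x)$, which is a polynomial of degree at most $n+1$ that is nonnegative on $[-1,1]$ and satisfies $q(1) = 0$ together with $q'(1) = -p(1) = -1$. Write $M := \max_{x \in [-1,1]} q(x)$; since $p \geq 0$ with $p(1) = 1$, we have $M > 0$, and the desired bound reads $M \geq 2/(n+1)^2$.

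To put this into a form where Markov's inequality applies, rescale by setting $s(x) := 1 - 2q(x)/M$. Then $s$ is a polynomial of degree at most $n+1$ satisfying $|s(x)| \leq 1$ on $[-1,1]$, $s(1) = 1$, and $s'(1) = -2q'(1)/M = 2/M$. Markov's inequality then yields
\[
\tfrac{2}{M} \,=\, |s'(1)| \,\leq\, \max_{[-1,1]}|s'| \,\leq\, (n+1)^2 \max_{[-1,1]}|s| \,\leq\, (n+1)^2,
\]
and rearranging gives $M \geq 2/(n+1)^2$.

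For the equality case, $M = 2/(n+1)^2$ forces every inequality in the chain above to be tight. The saturation of Markov's inequality requires $s$ to be a scalar multiple of $T_{n+1}$; the tightness of $\max_{[-1,1]}|s|\leq 1$ pins the modulus of the scalar to $1$; and the normalization $s(1) = 1 = c\,T_{n+1}(1) = c$ forces the scalar itself to equal $+1$. Hence $s = T_{n+1}$, which unwinds to $q(x) = (1 - T_{n+1}(x))/(n+1)^2$ and therefore $p(x) = g_n(x)$. The main subtlety is simply to invoke the equality case of Markov's inequality correctly; beyond that, no zero-counting or polynomial alternation argument is needed.
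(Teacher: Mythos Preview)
Your argument is correct, and it takes a genuinely different route from the paper's proof.

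The paper argues by equioscillation, in the spirit of the classical proof of Chebyshev's Theorem~\ref{thm:folklore}: assuming $\max_{[-1,1]}(1-x)p(x)\le 2/(n+1)^2$, it writes $p-g_n=(1-x)q$ with $\deg q\le n-1$, then uses the $n+2$ alternating extreme values of $(1-x)g_n(x)=(1-T_{n+1}(x))/(n+1)^2$ on $[-1,1]$ to force at least $n$ sign changes in $q$, whence $q\equiv 0$. By contrast, you black-box the alternation argument into Markov's inequality: the affine rescaling $s=1-2q/M$ (which is where the nonnegativity hypothesis on $p$ enters, to ensure $|s|\le 1$) converts the desired lower bound on $M$ into the endpoint derivative bound $|s'(1)|\le (n+1)^2$, and the equality case of Markov pins $s$ to $T_{n+1}$.

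What each buys: the paper's argument is self-contained and makes the geometry of the extremal polynomial visible, which is helpful since $g_n$ is then reused (via $h_n^2=g_{2n}$) in Theorem~\ref{thm:pos-poly}. Your argument is shorter and shows that Theorem~\ref{thm:main} is in fact \emph{equivalent} to the endpoint case of Markov's inequality, which is a nice structural observation; the cost is that you must invoke the sharp form of Markov, including its equality characterization, as a known result.
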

\noindent 
The polynomial $g_{2n}(x)$  turns out to always be a perfect square, which allows us to solve a related problem.  We introduce the function
$$h_n(x)=\frac{1}{2n+1} \left( 1+2\sum_{k=1}^{n} T_k(x) \right).$$
We will see that $h_n(x)$ is a polynomial that satisfies $h_n(x)^2=g_{2n}(x)$.  We also note that $h_n(1)=1$ because $T_k(1)=1$.

\begin{theorem}\label{thm:pos-poly}
Let $p(x)$ be a polynomial of degree at most $n$ that satisfies $p(1)=1$.  Then we have the inequality
$$\max_{x \in [-1,1]} (1-x)p(x)^2 \geq \frac{2}{(2n+1)^2},$$
with equality if and only if $p(x)=h_n(x)$.
\end{theorem}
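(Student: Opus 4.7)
The plan is to deduce Theorem~\ref{thm:pos-poly} from Theorem~\ref{thm:main} by squaring. Given a polynomial $p$ of degree at most $n$ with $p(1)=1$, the polynomial $q(x) := p(x)^2$ has degree at most $2n$, satisfies $q(1)=1$, and is nonnegative on all of $\mathbb{R}$ (in particular on $[-1,1]$). Applying Theorem~\ref{thm:main} with $2n$ in place of $n$ yields
$$\max_{x \in [-1,1]} (1-x) p(x)^2 = \max_{x \in [-1,1]} (1-x) q(x) \geq \frac{2}{(2n+1)^2},$$
with equality if and only if $q(x) = g_{2n}(x)$. The inequality is thus immediate; the work is in identifying the equality case, which amounts to verifying the claim $h_n(x)^2 = g_{2n}(x)$ stated just before the theorem.

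For this identity, the natural move is the substitution $x = \cos \xi$, which converts everything into a trigonometric statement that collapses via the Dirichlet kernel. On one hand,
$$h_n(\cos \xi) = \frac{1}{2n+1}\left(1 + 2\sum_{k=1}^{n} \cos(k\xi)\right) = \frac{1}{2n+1} \cdot \frac{\sin((n+\tfrac12)\xi)}{\sin(\xi/2)},$$
where the last equality is the closed form for the Dirichlet kernel. On the other hand, using $1 - \cos\theta = 2\sin^2(\theta/2)$ twice,
$$g_{2n}(\cos \xi) = \frac{1}{(2n+1)^2} \cdot \frac{1 - \cos((2n+1)\xi)}{1-\cos \xi} = \frac{1}{(2n+1)^2} \cdot \frac{\sin^2((n+\tfrac12)\xi)}{\sin^2(\xi/2)}.$$
Comparing the two expressions gives $h_n(\cos \xi)^2 = g_{2n}(\cos \xi)$ for all $\xi$. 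Since two polynomials agreeing on $[-1,1]$ agree everywhere, we conclude $h_n(x)^2 = g_{2n}(x)$, which in particular confirms that $g_{2n}$ is a perfect square and that $h_n$ is genuinely a polynomial of degree $n$.

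To finish the equality analysis, suppose equality holds in Theorem~\ref{thm:pos-poly}. Then equality must hold in Theorem~\ref{thm:main} applied to $q = p^2$, forcing $p(x)^2 = g_{2n}(x) = h_n(x)^2$, so $p(x) = \pm h_n(x)$. The normalization $p(1)=h_n(1)=1$ then pins down $p = h_n$, and conversely $p = h_n$ clearly achieves equality. The only step with any substance is the identity $h_n^2 = g_{2n}$, and the Dirichlet kernel computation renders even this routine; the real content of the theorem is inherited from Theorem~\ref{thm:main}.
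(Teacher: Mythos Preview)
Your proof is correct and follows essentially the same approach as the paper: you apply Theorem~\ref{thm:main} to $q=p^2$, then identify the equality case via the trigonometric identity $h_n(\cos\xi)^2=g_{2n}(\cos\xi)$ obtained from the Dirichlet kernel closed form. The paper presents the identity $h_n^2=g_{2n}$ before the proof rather than within it, but the content and method are the same.
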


\noindent Theorems~\ref{thm:laplacian} and~\ref{thm:first-deriv} will turn out to be consequences of Theorems~\ref{thm:main} and~\ref{thm:pos-poly}, respectively.

\subsection{A continuous analogue}  While investigating the continuous version of the main Question, the second author established the following uncertainty principle.
\begin{theorem}[Steinerberger \cite{steini2}]\label{thm:stefan}
For every $\alpha > 0$ and $\beta > n/2$, there exists a constant $c_{\alpha, \beta,n} > 0$ such that for all functions $u \in L^1(\mathbb{R}^n)$, we have
$$  \| |\xi|^{\beta} \cdot \widehat{u}\|^{\alpha}_{L^{\infty}(\mathbb{R}^n)} \cdot \| |x|^{\alpha} \cdot u \|^{\beta}_{L^1(\mathbb{R}^n)} \geq c_{\alpha, \beta,n} \|u\|_{L^1(\mathbb{R}^n)}^{\alpha + \beta}.$$
\end{theorem}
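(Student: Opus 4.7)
The plan is to bound $\|u\|_{L^1}$ from above by a weighted geometric mean of $\||\xi|^\beta \widehat{u}\|_{L^\infty}$ and $\||x|^\alpha u\|_{L^1}$ via a two-scale decomposition, one in physical space and one on the Fourier side. Let me write $L = \|u\|_{L^1}$, $A = \||\xi|^\beta \widehat{u}\|_{L^\infty}$, $B = \||x|^\alpha u\|_{L^1}$ and assume all three are finite (otherwise the inequality is vacuous). The goal is to produce $L^{\alpha+\beta} \leq C A^\alpha B^\beta$.

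First I would decompose at a spatial scale $R > 0$, writing $L = \int_{|x| \leq R} |u| \, dx + \int_{|x| > R} |u| \, dx$. The outer piece is handled by a one-line Markov-type estimate $\int_{|x| > R} |u|\, dx \leq R^{-\alpha} B$, which is where the hypothesis $\alpha > 0$ enters. For the inner piece, Cauchy--Schwarz on the ball gives $\int_{|x| \leq R} |u| \, dx \leq C_n R^{n/2} \|u\|_{L^2}$, which reduces the problem to an $L^2$ bound on $u$ via Plancherel. Now I would split $\widehat{u}$ at a Fourier scale $\rho > 0$: on $|\xi| \leq \rho$ use the trivial estimate $|\widehat{u}| \leq L$ to get a contribution of order $L^2 \rho^n$, while on $|\xi| > \rho$ use $|\widehat{u}(\xi)| \leq A|\xi|^{-\beta}$ to get $A^2 \int_{|\xi| > \rho} |\xi|^{-2\beta} \, d\xi$. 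It is precisely here that $\beta > n/2$ is essential, since otherwise this tail integral diverges; one obtains a contribution of order $A^2 \rho^{n - 2\beta}/(2\beta - n)$. Optimizing by choosing $\rho = (A/L)^{1/\beta}$ balances the two terms and yields $\|u\|_{L^2}^2 \lesssim L^{2 - n/\beta} A^{n/\beta}$.

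Inserting this back into the Cauchy--Schwarz estimate produces
\[
L \;\leq\; C_{n,\beta}\, R^{n/2}\, L^{1 - n/(2\beta)} A^{n/(2\beta)} \;+\; R^{-\alpha} B .
\]
Choosing $R = (2B/L)^{1/\alpha}$ forces the second term to equal $L/2$, so the first term must absorb at least $L/2$. Rearranging gives $L^{n/(2\alpha) + n/(2\beta)} \lesssim A^{n/(2\beta)} B^{n/(2\alpha)}$, and raising both sides to the $2\alpha\beta/n$ power delivers the desired $L^{\alpha + \beta} \leq C A^\alpha B^\beta$ with an explicit constant depending only on $\alpha, \beta, n$.

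The main obstacle I anticipate is bookkeeping the dimensional analysis cleanly: there are two free scales ($R$ in space, $\rho$ in frequency), each of which must be optimized, and the exponents have to align exactly so that $L$ reappears on the right with the correct total power. The condition $\beta > n/2$ is not merely technical; the constant in the argument blows up like $1/(2\beta - n)$, signalling that the borderline case would require a genuinely different estimate (e.g., logarithmic corrections or a Besov-type refinement of the $L^\infty$ bound on $\widehat u$).
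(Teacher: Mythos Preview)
Your argument is correct. The two-scale splitting---Markov's inequality in physical space at radius $R$, Cauchy--Schwarz on the ball, and then a Fourier-side split at radius $\rho$ using $|\widehat u|\le L$ near the origin and $|\widehat u|\le A|\xi|^{-\beta}$ at infinity---goes through cleanly, and the choices $\rho=(A/L)^{1/\beta}$ and $R=(2B/L)^{1/\alpha}$ balance the terms to yield $L^{\alpha+\beta}\le C_{\alpha,\beta,n}A^\alpha B^\beta$. The only technical point worth a remark is that Plancherel is legitimate here: from $u\in L^1$ we get $\widehat u$ continuous and bounded by $L$, and together with $|\widehat u(\xi)|\le A|\xi|^{-\beta}$ with $\beta>n/2$ this forces $\widehat u\in L^2$, hence $u\in L^2$ with $\|u\|_{L^2}=\|\widehat u\|_{L^2}$.

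As for comparison with the paper: there is nothing to compare against. The paper quotes this inequality as a result of Steinerberger \cite{steini2} and does not reprove it; it is used only as background motivation for the continuous analogue discussed in Section~1.4. So your write-up supplies a self-contained proof where the paper simply cites one. Your approach is in fact the standard route to inequalities of this type (decompose at a scale, use the trivial bound on one piece and the weighted bound on the other, then optimize), and it recovers the correct blow-up $c_{\alpha,\beta,n}\to 0$ as $\beta\downarrow n/2$.
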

\noindent It is natural to wonder about the existence, uniqueness and structure of extremizing functions. This determination is generally difficult, as is often the case for such sharp inequalities.  In the special case where $n=\beta=1$, Theorem~\ref{thm:stefan} simply says that
$$  \| |\xi|^{} \cdot \widehat{u}\|^{\alpha}_{L^{\infty}(\mathbb{R})} \cdot \| |x|^{\alpha} \cdot u \|^{}_{L^1(\mathbb{R})} \geq c_{\alpha}\|u\|_{L^1(\mathbb{R})}^{\alpha+1}.$$
It was then established in \cite{steini2} that, for $\alpha \in \left\{2,3,4,5,6\right\}$, the characteristic function $ u= \chi_{[-1,1]}$ is a local extremizer for this inequality in the class of compactly supported functions on $[-1,1]$ that are three-times continuously differentiable. The curious restriction of $\alpha$ to these indices is due to an algebraic step in the proof that relies on a certain sign pattern for $_2 F_1$.  This sign pattern seems easy to verify or falsify for any particular $\alpha \in \mathbb{N}$ (as was done for $\alpha \in \left\{2,3,4,5,6\right\}$), and it may well hold for all integers $\alpha \geq 2$. It is less clear how to establish the corresponding result for real $\alpha \geq 2$.\\

This local stability property of $\chi_{[-1,1]}$ can be interpreted as a continuous analog of Theorem~\ref{thm:first-deriv} (smoothness measured by the first derivative). Thus, our Theorem~\ref{thm:laplacian} (smoothness measured by the second derivative, with a restriction to functions with nonnegative Fourier transorm) corresponds to the $n=1$, $\beta=2$ case of Theorem~\ref{thm:stefan}, with the additional restriction that $\widehat{u}$ be nonnegative.  It is natural to ask whether or not  $u(x)=1-|x|$ is a local extremizer for the inequality
$$  \| |\xi|^{2} \cdot \widehat{u}\|^{\alpha}_{L^{\infty}(\mathbb{R})} \cdot \| |x|^{\alpha} \cdot u \|^{2}_{L^1(\mathbb{R})} \geq c_{\alpha} \|u\|_{L^1(\mathbb{R})}^{\alpha + 2}$$
in the class of $L^1$-functions with nonnegative Fourier transform.
 We specialize to the case $\alpha=2$, although (in analogy with the discussion above) our findings may be valid for a wide range of values of $\alpha$; a uniform treatment of all $\alpha$ seems to be more difficult.
\begin{theorem}\label{thm:triangle}
There exists a constant $c>0$ such that for all functions $u \in L^1(\mathbb{R})$, we have
$$ \|\widehat{u} \cdot |\xi|^2 \|^{2}_{L^{\infty}} \cdot \| u \cdot |x|^{2}\|^2_{L^1} \geq c \|u\|_{L^1}^{4}.$$
 Moreover, $u(x) = 1-|x|$ is a local extremizer in the class of all symmetric $C^3$-functions compactly supported on $[-1,1]$ with nonnegative Fourier transform.
\end{theorem}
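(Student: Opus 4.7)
The existence of the constant $c>0$ in the inequality is immediate from Theorem~\ref{thm:stefan} with $n=1$, $\beta=2$, $\alpha=2$, so the real content is the local-extremality claim. Let $u_0(x)=1-|x|$ on $[-1,1]$. A direct computation gives $\|u_0\|_{L^1}=1$, $\|\,|x|^2 u_0\,\|_{L^1}=1/6$, and $\xi^2\widehat{u_0}(\xi)=4\sin^2(\xi/2)$, which attains its $L^\infty$-norm $4$ precisely on the lattice $\xi_k:=(2k+1)\pi$, $k\in\mathbb{Z}$. The scale-invariant functional
$$F(u):=\|\,|\xi|^2\widehat u\,\|_{L^\infty}^2\cdot\|\,|x|^2 u\,\|_{L^1}^2\cdot\|u\|_{L^1}^{-4}$$
thus equals $4/9$ at $u_0$, and the goal is to show $F(u_0+\epsilon\phi)\geq F(u_0)$ for $\epsilon\geq 0$ small and every $\phi$ in the tangent cone of admissible directions (symmetric, real, $C^3$, supported in $[-1,1]$, and compatible with $\widehat{u_0}+\epsilon\widehat\phi\geq 0$).

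Since $\widehat{u_0}$ has only double zeros, at $\eta_k:=2\pi k$ for $k\neq 0$, the constraint $\widehat{u_0}+\epsilon\widehat\phi\geq 0$ for $\epsilon\geq 0$ small is equivalent to
$$\widehat\phi(2\pi k)\geq 0, \qquad\text{and if } \widehat\phi(2\pi k)=0 \text{ then } \widehat\phi'(2\pi k)=0 \quad\text{for every }k\neq 0.$$
Next I would Taylor-expand each factor of $F$ in $\epsilon$. The $L^1$-integrals are smooth, yielding first-order corrections $\int\phi\,dx$ and $\int x^2\phi\,dx$. The $L^\infty$-factor requires a local analysis near each $\xi_k$: expanding
$(\xi_k+t)^2\widehat{u_\epsilon}(\xi_k+t)=4-t^2+\epsilon\bigl[\xi_k^2\widehat\phi(\xi_k)+t\cdot(2\xi_k\widehat\phi(\xi_k)+\xi_k^2\widehat\phi'(\xi_k))\bigr]+O(\epsilon t^2,t^4)$
and optimizing in $t$ gives a perturbed local maximum $4+\epsilon\,\xi_k^2\widehat\phi(\xi_k)+\epsilon^2 Q_k(\phi)+O(\epsilon^3)$ for an explicit quadratic $Q_k$, and the global $L^\infty$-norm is the supremum of these over $k$ (which is attained, since the $C^3$-hypothesis forces $\xi_k^2\widehat\phi(\xi_k)\to 0$). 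Combining, the first-order expansion of $F$ reads
$$F(u_0+\epsilon\phi)=F(u_0)+\epsilon\,F(u_0)\,\Bigl[\tfrac{1}{2}\sup_k\xi_k^2\widehat\phi(\xi_k)+12\int x^2\phi\,dx-4\int\phi\,dx\Bigr]+O(\epsilon^2).$$

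The remaining work splits into two tasks. Task (i): show that the bracket is nonnegative for every admissible $\phi$. This is a linear-programming statement --- a linear functional of $\phi$ subject to the one-sided constraints $\widehat\phi(\eta_k)\geq 0$ --- and I would attack it by duality/Poisson summation, with the sanity check $\phi=-u_0$ giving exactly zero (since both sides scale together) suggesting that the weight $12x^2-4$ is indeed the correct dual of the spectral peaks at the $\xi_k$. Task (ii), and the main obstacle, is to show that whenever the first-order term vanishes the second-order expansion is also nonnegative. Here I would reparametrize on the Fourier side by writing $\widehat\phi(\xi)=4\sin^2(\xi/2)\,g(\xi)$, which absorbs all the compatibility conditions at the $\eta_k$ into the smoothness of $g$, and converts the second variation of $F$ into an explicit quadratic form in $g$. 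Verifying positivity of this quadratic form should mirror, in a continuum limit, the Chebyshev-based inequality of Theorem~\ref{thm:main}; I expect the actual check to proceed as in the $_2F_1$ sign-pattern calculation of \cite{steini2} for the first-derivative analogue, and the specific restriction to $\alpha=2$ in the statement of Theorem~\ref{thm:triangle} is likely a reflection of precisely this $\alpha$-dependent positivity step.
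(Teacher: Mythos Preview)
Your first-order setup is correct: up to the choice of Fourier normalization, your bracket inequality
\[
\tfrac{1}{2}\sup_k\xi_k^2\widehat\phi(\xi_k)+12\int_{-1}^{1} x^2\phi\,dx-4\int_{-1}^{1}\phi\,dx\ \geq\ 0
\]
is exactly Proposition~\ref{prop:hypergeo} of the paper. The paper proves it by a direct Plancherel computation on $[-1,1]$: one writes $\int_{-1}^{1} f(x)(1-3x^2)\,dx=\tfrac{1}{2}\sum_{j\in\mathbb{Z}/2} a_j\widehat f(j)$ and computes that $a_0=0$, that the integer coefficients $a_k=-3/(k^2\pi^2)$ are negative, and that the half-integer coefficients $a_{k+1/2}=12/((2k+1)^2\pi^2)$ are positive. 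Your tangent-cone constraint $\widehat\phi(\eta_k)\geq 0$ (equivalently $\widehat f(n)\geq 0$ at nonzero integers, which is precisely the hypothesis of Proposition~\ref{prop:hypergeo}) lets one drop the negative part of the sum; bounding each remaining term by $\gamma/|n+\tfrac{1}{2}|^2$ with $\gamma=\sup_n |n+\tfrac{1}{2}|^2\widehat f(n+\tfrac{1}{2})$ then reduces to the explicit series $\sum_{k\in\mathbb{Z}}(2k+1)^{-4}=\pi^4/48$. This is elementary; no hypergeometric sign pattern enters.

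The genuine gap in your plan is that you have misidentified Task~(ii) as ``the main obstacle'' when it is in fact unnecessary. Tracing equality through the Plancherel argument forces $\widehat f(n)=0$ at every nonzero integer and $\widehat f(n+\tfrac12)=\gamma/|n+\tfrac12|^2$ at every half-integer; since a function supported on $[-1,1]$ is determined by its Fourier transform on $\mathbb{Z}/2$, this pins $\phi$ down to a scalar multiple of $u_0$. Those are exactly the scaling directions along which the scale-invariant functional $F$ is constant, so in every other admissible direction the first variation is \emph{strictly} positive and local extremality follows immediately. Your proposed second-variation analysis, the Chebyshev analogy, and the anticipated $_2F_1$ computation are all red herrings: the whole argument lives at first order, and the restriction to $\alpha=2$ in the theorem reflects nothing deeper than the clean sign pattern of the Fourier coefficients of $1-3x^2$ computed above.
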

\noindent More precisely, we will show that for each symmetric function $f:[-1,1] \to \mathbb{R}$  such that $\widehat{f}(\xi) \geq 0$ for all $\xi \in \mathbb{R}$ and $\widehat{f}(\xi)$ has sufficient decay, the functional
$$ J_f(\varepsilon) = \frac{\|\widehat{(u+\varepsilon f)} \cdot |\xi|^2 \|^{2}_{L^{\infty}} \cdot \| (u+\varepsilon f) \cdot |x|^{2}\|^2_{L^1}}{\|u+\varepsilon f\|_{L^1}^4}$$
satisfies $J'_f(0)> 0$. 
We do not know whether or not $1-|x|$ is actually a global extremizer among all functions with nonnegative Fourier transform (without any conditions on the support). The analogy with Theorem~\ref{thm:laplacian} suggests that it could be an optimizer among positive-definite functions with support on $[-1,1]$. The stability analysis in the proof of Theorem~\ref{thm:triangle} makes use of the following curious proposition that is also of interest in its own right.
\begin{proposition}\label{prop:hypergeo}
Let $f \in L^1[-1,1]$ satisfy $\widehat{f}(n) \geq 0$ for all $n \in \mathbb{Z} \setminus \left\{0\right\}$. Then
$$\sup_{n \in \mathbb{Z}}{ \widehat{f}\left(n+\frac{1}{2}\right)\cdot \left|n+\frac{1}{2}\right|^2} \geq  \frac{ 2}{ \pi^2} \int_{-1}^{1}{f(x)(1-3x^2) \,dx}.$$
with equality if and only if $f(x) = c(1-|x|)$ for some constant $c \geq 0$. \end{proposition}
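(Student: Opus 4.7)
The plan is to evaluate $\int_{-1}^{1} f(x)(1-3x^2)\,dx$ via Parseval on the torus $[-1,1]$ of period $2$, where the natural Fourier coefficient is $\widehat{g}(k/2) := \int_{-1}^{1} g(x) e^{-i\pi kx}\,dx$ for $k \in \mathbb{Z}$. Splitting by parity of $k$ separates this sequence into the integer family $\widehat{g}(n)$ and the half-integer family $\widehat{g}(n+\tfrac{1}{2})$. The whole argument rests on the observation that $1-3x^2$ is a test function whose Fourier coefficients alternate in sign between these two families, which is precisely what makes the positivity hypothesis on $\widehat{f}(n)$ useful.

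First I would compute the Fourier coefficients of $1-3x^2$. Two applications of integration by parts show that $\widehat{(1-3x^2)}(k/2) = -12(-1)^k/(\pi^2 k^2)$ for $k \neq 0$, which is $-3/(\pi^2 n^2)$ when $k=2n$ and $+3/(\pi^2(n+\tfrac{1}{2})^2)$ when $k=2n+1$. Inserting these into Parseval produces the exact identity
$$
\frac{2}{\pi^2}\int_{-1}^{1} f(x)(1-3x^2)\,dx = \frac{3}{\pi^4}\sum_{n \in \mathbb{Z}}\frac{\widehat{f}(n+\tfrac{1}{2})}{(n+\tfrac{1}{2})^2} \;-\; \frac{3}{\pi^4}\sum_{n \neq 0}\frac{\widehat{f}(n)}{n^2}.
$$

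Next, the hypothesis $\widehat{f}(n) \geq 0$ for $n \neq 0$ makes the subtracted sum nonnegative, so the left-hand side is bounded above by the first sum alone. Writing $M := \sup_n \widehat{f}(n+\tfrac{1}{2})(n+\tfrac{1}{2})^2$, the termwise inequality $\widehat{f}(n+\tfrac{1}{2})/(n+\tfrac{1}{2})^2 \leq M/(n+\tfrac{1}{2})^4$ combined with the classical evaluation $\sum_{n \in \mathbb{Z}}(n+\tfrac{1}{2})^{-4} = \pi^4/3$ makes the constants cancel to produce exactly $\frac{2}{\pi^2}\int f(1-3x^2)\,dx \leq M$, as desired.

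For equality both inequalities above must saturate, forcing $\widehat{f}(n) = 0$ for all $n \neq 0$ and $\widehat{f}(n+\tfrac{1}{2}) = M/(n+\tfrac{1}{2})^2$ for all $n$. Since the triangle function has Euclidean Fourier transform $\widehat{(1-|x|)}(\xi) = \sin^2(\pi\xi)/(\pi\xi)^2$, which vanishes at nonzero integers and equals $1/(\pi^2(n+\tfrac{1}{2})^2)$ at half-integers, the choice $f = c(1-|x|)$ with $c = M\pi^2 \geq 0$ realizes equality. I do not anticipate a genuine obstacle: the entire argument is Parseval plus one classical series evaluation, and the only real content is recognizing that $1-3x^2$ is the unique-up-to-scaling test function whose Fourier coefficients produce the correct sign split across integer and half-integer lattices.
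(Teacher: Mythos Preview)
Your argument is essentially identical to the paper's: Parseval on $[-1,1]$, the same sign split of the coefficients of $1-3x^2$ between integer and half-integer frequencies, discarding the nonpositive integer sum, and the same termwise bound capped off by $\sum_{n}(n+\tfrac12)^{-4}=\pi^4/3$.

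The one place where your write-up is thinner than the paper's is the \emph{only if} direction of the equality case. You correctly deduce that equality forces $\widehat f(n)=0$ for $n\neq 0$ and $\widehat f(n+\tfrac12)=M/(n+\tfrac12)^2$, and then you verify that $c(1-|x|)$ has exactly these coefficients; but that is only the sufficiency direction. To finish uniqueness you must say that the values of $\widehat f$ on $\mathbb{Z}/2$ determine $f\in L^1[-1,1]$ completely (they are its period-$2$ Fourier series), so the constraints pin $f$ down except for the zero mode $\widehat f(0)$, which contributes an additive constant $d$; the paper then argues (via the compact-support interpretation) that $d=0$. Adding this sentence completes the proof.
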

\noindent Like its analog in \cite{steini2}, this amusing identity looks as if it may be a representative of a larger family of identities. It would be interesting to gain a better understanding of when such inequalities are possible.

\subsection{Open problems}
As suggested above, it is possible to measure smoothness with a differential operator other than the discrete derivative and Laplacian.  By taking a Fourier transform, one can reduce the problem of finding an optimal kernel $u$ to that of finding $u$ that minimizes
$$\max_{\xi \in \mathbb{T}} \left| r(e^{i \xi})  \right| \cdot \left| \sum_{k=-n}^{n}{u(k) e^{i k \xi}} \right|^2,$$
where $r$ is a polynomial that depends on the differential operator chosen.  This problem seems difficult in general, but it may be tractable for some particularly nice differential operators.  We also emphasize that the analog of Theorem~\ref{thm:laplacian} without the requirement of $u$ having nonnegative Fourier transform remains open.  Finally, it could be interesting to establish more robust and direct connections between the continuous and discrete versions of the questions raised in this paper.  This type of correspondence has recently proven fruitful in some areas of additive combinatorics (e.g.,
\cite{barnard, bern, cill, clon, noah, mad, mat}).

\section{Proofs}
In Section~\ref{sec:poly-proof}, we prove the main polynomial inequality Theorem~\ref{thm:main} and derive Theorem~\ref{thm:pos-poly} as a corollary.  In Section~\ref{sec:reduction}, we use Fourier analysis to reduce Theorems~\ref{thm:first-deriv} and~\ref{thm:laplacian} to Theorems~\ref{thm:pos-poly} and~\ref{thm:main}, respectively.  In Section~\ref{sec:triangle}, we prove Theorem~\ref{thm:triangle}, together with Proposition~\ref{prop:hypergeo}.

\subsection{Polynomial extremizers}\label{sec:poly-proof}
We begin by establishing Theorem~\ref{thm:main}.  The argument is a modification of the standard proof for Chebyshev's well-known Theorem~\ref{thm:folklore}.  Recall the definition of our (claimed) degree-$n$ extremal polynomial 
$$g_n(x)=\frac{1}{(n+1)^2} \cdot \frac{1-T_{n+1}(x)}{1-x}.$$

\begin{proof}[Proof of Theorem~\ref{thm:main}]
Consider the polynomial of degree $n+1$ given by
$$(1-x)g_n(x)=\frac{1}{(n+1)^2}(1-T_{n+1}(x)).$$
The relationship $T_{n+1}(\cos(\xi))=\cos((n+1)\xi)$ makes it clear that, for $x \in [-1,1]$, this polynomial assumes values between $0$ and $2/(n+1)^2$. Moreover, as $x$ decreases from $1$ to $-1$, it alternately assumes these two extremal values a total of $n+2$ times (including at $x=1$).
Now, let $p(x)$ be a polynomial of degree at most $n$ that is nonnegative on $[-1,1]$ and satisfies $p(1)=1$, and suppose that
$$\max_{x \in [-1,1]} (1-x)p(x) \leq \frac{2}{(n+1)^2}.$$
We will show that necessarily $p(x)=g_n(x)$.  Recall that  $g_n(1)=p(1)=1$.  In particular, the difference $p(x)-g_n(x)$ has a root at $x=1$, so we can write 
$$p(x)-g_n(x)=(1-x)q(x),$$ where $q(x)$ is a polynomial of degree at most $n-1$.  We will show that $q(x)\equiv 0$ uniformly.  Since $1-x$ is strictly positive on $[-1,1)$, we see that $(1-x)(p(x)-g_n(x))=(1-x)^2q(x)$ and $q(x)$ have the same sign everywhere on this interval.  For each $x^* \in [-1,1)$ satisfying $(1-x^*)g_n(x^*)=0$, the assumption on $p$ tells us that $(1-x^*)p(x^*) \geq 0$, whence we conclude that $q(x^*) \geq 0$.  By the same argument, we have that $q(x') \leq 0$ for each $x' \in [-1,1)$ satisfying $(1-x')g_n(x')=2/(n+1)^2$.  Since the $n+1$ values of $x^*$ and $x'$ interlace (as described above), we see that $q(x)$ has at least $n$ sign changes.  But the number of sign changes of a nonzero polynomial is at most its degree, so we conclude that $q(x)$ is the zero polynomial.  This concludes the proof.
\end{proof}

Theorem~\ref{thm:pos-poly} follows immediately from the claim that $h_n(x)^2=g_{2n}(x)$.  This relation is straightforward to check once we make the substitution $x=\cos(\xi)$ and recognize
$$h_n(\cos(\xi))=\frac{1}{2n+1} \cdot \frac{\sin((n+1/2)\xi)}{\sin(\xi/2)}$$
as the Dirichlet kernel $D_n(\xi)$.  The further computation
\begin{align*}
D_n(\xi)^2 &=\frac{1}{(2n+1)^2} \cdot \left( \frac{\sin((n+1/2)\xi)}{\sin(\xi/2)} \right)^2\\
 &=\frac{1}{(2n+1)^2} \cdot \frac{1-\cos^2((n+1/2)\xi)}{1-\cos^2(\xi/2)} \\
 &=\frac{1}{(2n+1)^2} \cdot \frac{1-\cos((2n+1)\xi)}{1-\cos(\xi)}
\end{align*}
shows that indeed $h_n(x)^2=g_{2n}(x)$. 

\begin{proof}[Proof of Theorem~\ref{thm:pos-poly}]
Note that $p(x)^2$ is a polynomial of degree $2n$ that is nonnegative on $[-1,1]$ and satisfies $p(1)^2=1^2=1$.  Then Theorem~\ref{thm:main} tells us that
$$\max_{x \in [-1,1]} (1-x)p(x)^2 \geq \frac{2}{(2n+1)^2},$$
with equality if and only if $p(x)^2=g_{2n}(x)$.  This equality condition establishes $p(x)$ up to a sign, the the assumption $p(1)=1$ tells us that we must choose $p(x)=h_n(x)$.
\end{proof}

\subsection{From discrete kernels to polynomial extremizers}\label{sec:reduction}

We begin by recalling a few facts from Fourier analysis. For any $f:\mathbb{Z} \to \mathbb{R}$, we consider its Fourier transform $\widehat{f}: \mathbb{T} \to \mathbb{C}$ given by
$$ \widehat{f}(\xi) = \sum_{k \in \mathbb{Z}} f(k) e^{- i \xi k}.$$
We will sometimes write $\widehat{f}=\mathcal{F}(f)$ for readability.  We recall the Convolution Theorem
$$ \widehat{ f * g} = \widehat{f} \cdot \widehat{g}$$
and the Plancherel Identity
$$ \sum_{k \in \mathbb{Z}} f(k)\overline{g(k)} = \frac{1}{2\pi} \int_{\mathbb{T}} \widehat{f}(\xi)\overline{\widehat{g}(\xi)} \,d\xi.$$
We also mention that the Fourier transform of a shifted function $f(k-m)$ is given by
\begin{align*}\widehat{f(k-m)} &=  \sum_{k \in \mathbb{Z}} f(k-m) e^{- i \xi k}=  \sum_{k \in \mathbb{Z}} f(k) e^{- i \xi (k+m)}\\
&= e^{-i \xi m} \sum_{k \in \mathbb{Z}} f(k) e^{- i \xi k} = e^{-i \xi m} \widehat{f}(\xi).
\end{align*}
For instance, we immediately have
$$\mathcal{F}(\nabla f)=(e^{i \xi}-1)\widehat{f}$$
and
$$\mathcal{F}(\Delta f)=(e^{2i \xi}-2e^{i \xi}+1)\widehat{f}=(e^{i \xi}-1)^2\widehat{f}.$$
We can now proceed with the proofs of Theorems~\ref{thm:first-deriv} and~\ref{thm:laplacian}.

\begin{proof}[Proof of Theorem~\ref{thm:first-deriv}]
We aim to understand the behavior of 
\begin{align*}
\| \nabla (f*u)\|_{\ell^2}^2 = \sum_{k \in \mathbb{Z}} |(\nabla (f*u))(k)|^2.
\end{align*}
Applying the Plancherel Identity leads to the estimate
\begin{align*}
\sum_{k \in \mathbb{Z}} |(\nabla (f*u))(k))|^2 &= \frac{1}{2\pi} \int_{\mathbb{T}}  |e^{i \xi} - 1|^2 |\widehat{f}(\xi)|^2  |\widehat{u}(\xi)|^2 \,d\xi\\
&\leq \| |e^{i \xi} - 1|^2 |\widehat{u}(\xi)|^2 \|_{L^{\infty}(\mathbb{T})}  \cdot  \frac{1}{2\pi} \int_{\mathbb{T}} |\widehat{f}(\xi)|^2 \,d\xi \\
&= \| |e^{i \xi} - 1|^2 |\widehat{u}(\xi)|^2 \|_{L^{\infty}(\mathbb{T})}   \cdot \sum_{k \in \mathbb{Z}}{|f(k)|^2},
\end{align*}
and taking square roots gives
$$\| \nabla (f*u)\|_{\ell^2(\mathbb{Z})} \leq \| (e^{i \xi} - 1)\widehat{u}(\xi)\|_{L^{\infty}(\mathbb{T})} \cdot \| f\|_{\ell^2}.$$
We claim that, for each choice of $u$, the constant $\|(e^{i \xi} - 1)\widehat{u}(\xi)\|_{L^{\infty}(\mathbb{T})}$ on the right-hand side is in fact optimal: the (only) inequality in these calculations can be made arbitrarily close to equality by taking $\widehat{f}$ to have mass concentrated at a value of $\xi \in \mathbb{T}$ where $(e^{i \xi} - 1)\widehat{u}(\xi)$ achieves its maximum magnitude. (Since our extremizing choices for $u$ will turn out to be real, the real part of such an approximating function $f$ shows that one cannot hope for a better constant by restricting to purely real functions.)  Thus, we conclude that
$$\sup_{0 \neq f \in \ell^2(\mathbb{Z})} \frac{ \| \nabla (f*u)\|_{\ell^2(\mathbb{Z})}}{\| f\|_{\ell^2}} =\| (e^{i \xi} - 1)\widehat{u}(\xi)\|_{L^{\infty}(\mathbb{T})},$$
so our problem is reduced to minimizing the quantity $$M(u)=\|(e^{i \xi} - 1)\widehat{u}(\xi)\|_{L^{\infty}(\mathbb{T})},$$
among all symmetric functions $u: \left\{-n, \dots, n\right\} \to \mathbb{R}$ with normalization $ \sum_{k \in \mathbb{N}}{ u(k)} = 1$.\\

Note that
$$M(u)^2=2\max_{0 \leq \xi \leq 2\pi} (1-\cos{\xi}) \left|  \sum_{k \in \mathbb{Z}} u(k) e^{- i \xi k} \right|^2.$$
Since $u$ is symmetric and real-valued, we can write
$$\left| \sum_{k\in \mathbb{Z}} u(k) e^{- i \xi k} \right|^2 =\left( u(0) + \sum_{k=1}^{n}{2u(k) \cos{(k \xi)}} \right)^2.$$
Using Chebyshev polynomials to expand the cosines and then making the substitution $x=\cos \xi$, we get
$$M(u)^2=2\max_{-1 \leq x \leq 1}(1-x)p_u(x)^2,$$
where
$$p_u(x)=u(0)+\sum_{k=1}^n 2u(k) T_k(x)$$
is a real-valued polynomial of degree at most $n$. Note that, since each Chebyshev polynomial satisfies $T_k(1)=1$, we have
$$p_u(1)=u(0)+\sum_{k=1}^n 2u(k) T_k(1)=\sum_{k=-n}^n u(k) =1.$$
Theorem~\ref{thm:pos-poly} tells us that
$$M(u)^2 \geq \frac{4}{(2n+1)^2}, \quad \text{i.e.,} \quad M(u) \geq \frac{2}{(2n+1)},$$
with equality only for $p_u(x)=h_n(x)$.  Finally, it is immediate from the definition of $h_n(x)$ that $p_u(x)=h_n(x)$ corresponds to the choice $u(k)=1/(2n+1)$.
\end{proof}

We prove Theorem~\ref{thm:laplacian} in much the same way.

\begin{proof}[Proof of Theorem~\ref{thm:laplacian}]
Replacing $\Delta$ with $\nabla$ in the argument from the previous proof shows that
$$\sup_{0 \neq f \in \ell^2(\mathbb{Z})} \frac{\| \Delta (f*u) \|_{\ell^2}}{ \|f\|_{\ell^2}}= \| (e^{i \xi} - 1)^2 \widehat{u}(\xi) \|_{L^{\infty}(\mathbb{T})},$$
so our problem is reduced to minimizing the quantity
$$L(u)=\| (e^{i \xi} - 1)^2 \widehat{u}(\xi) \|_{L^{\infty}(\mathbb{T})},$$
among all symmetric functions $u: \left\{-n, \dots, n\right\} \to \mathbb{R}$ with the normalization $ \sum_{k \in \mathbb{N}}{ u(k)} = 1$ and the additional hypothesis that $\widehat{u}$ is nonnegative.
Expanding $\widehat{u}$ using Chebyshev polynomials and substituting $x=\cos \xi$ as before, we find that
$$L(u)=2 \max_{-1\leq x \leq 1}(1-x)p_u(x),$$
where
$$p_u(x)=u(0)+\sum_{k=1}^n 2u(k) T_k(x)$$
is a real-valued polynomial of degree at most $n$.  As before, we have $p_u(1)=1$.  Note also that $p_u(x) \geq 0$ on $[-1,1]$ by the assumption on $\widehat{u}$.  Theorem~\ref{thm:main} tells us that
$$L(u) \geq \frac{4}{(n+1)^2},$$
with equality only for $p_u(x)=g_n(x)$.
It remains to show that $p_u(x)=g_n(x)$ corresponds to the choice $u(k)=(n+1-|k|)/(n+1)^2$.  We recognize
$$\widehat{u}(\xi)=u(0)+\sum_{k=1}^n 2u(k) \cos(k \xi)=\frac{1}{(n+1)^2} \cdot \frac{1-\cos((n+1)\xi)}{1-\cos(\xi)}$$
as a normalization of the Fej\'{e}r kernel $F_n(\xi)$.  (Another way to see this is to note that the discrete triangle function is the autoconvolution of the constant function and that the Fej\'{e}r kernel is a normalization of the square of the Dirichlet kernel.)  We immediately see that $p_u(x)=g_n(x)$ in this case, as desired.
\end{proof}

\subsection{The continuous triangle function kernel}\label{sec:triangle}
We first show how to reduce Theorem~\ref{thm:triangle} to Proposition~\ref{prop:hypergeo}.  The idea is to take $u_0(x) = \max\{1-|x|,0\}$ as our candidate kernel and consider the effect of a slight perturbation.  Our main tool is Taylor series analysis (in the size of the perturbation).  Given a function $f: \mathbb{R} \to \mathbb{C}$, we will work with its Fourier transform $\widehat{f}: \mathbb{R} \to \mathbb{C}$ as given by
$$\widehat{f}(\xi)=\int_{\mathbb{R}}f(x) e^{-2 \pi i\xi x} \,dx.$$
A crucial ingredient of the proof is to use the compact support of $f$ on $[-1,1]$. This allows us to also describe $f$ via Fourier coefficients (in this case the value of the Fourier transform of $f$ evaluated at $\mathbb{Z}/2$).

\begin{proof}[Proof of Theorem~\ref{thm:triangle}]
Fix a function $f:[-1,1] \to \mathbb{R}$ that is three times continuously differentiable and has strictly positive Fourier transform, and consider the functional
$$ J_f(\varepsilon) = \frac{\|\widehat{(u_0+\varepsilon f)} \cdot |\xi|^2 \|^{2}_{L^{\infty}} \cdot \| (u_0+\varepsilon f) \cdot |x|^{2}\|^2_{L^1}}{\|u_0+\varepsilon f\|_{L^1}^4}.$$ 
We will expand this functional up to first order and show that 
$$ J_f(\varepsilon) = J_f(0) + c_f \varepsilon + o(\varepsilon),$$
where $c_f> 0$ is a constant depending only on $f$.  We will examine the behavior of the terms in $J_f(\varepsilon)$ one at a time.\\

First, consider
$ I(\varepsilon)= \| (\widehat{u_0} + \varepsilon \widehat{f}) \cdot |\xi|^2 \|^{}_{L^{\infty}}$
as $\varepsilon \to 0$.  Computing the Fourier transform
$$ \widehat{u_0}(\xi) = \frac{\sin{(\pi \xi)^2}}{(\pi \xi)^2},$$
we note that $\widehat{u_0}(\xi) \cdot |\xi|^2=\sin(\pi \xi)^2/\pi^2$ oscillates between $0$ and $1/\pi^2$ and attains the latter value precisely when $\xi$ is a half-integer.  Our goal is to show that
\begin{equation}\label{eq:perturb}
I(\varepsilon) = \frac{1}{\pi^2} + \varepsilon  \sup_{n \in \mathbb{Z}}{ \widehat{f}\left(n+\frac{1}{2}\right)\cdot \left|n+\frac{1}{2}\right|^2} + o(\varepsilon),
\end{equation}
and our strategy involves the following three steps:
\begin{enumerate}[(i)]
\item For each $\varepsilon>0$, there is some $\xi$ such that $|(\widehat{u_0} + \varepsilon \widehat{f}) \cdot |\xi|^2|=I(\varepsilon)$, i.e., the maximum magnitude is actually attained.
\item This maximum is attained only for $\xi$ very close to a half-integer.  More precisely, every $\xi$ with $|(\widehat{u_0} + \varepsilon \widehat{f}) \cdot |\xi|^2|=I(\varepsilon)$ must lie at a distance of at most $\mathcal{O}(\sqrt{\varepsilon})$ from some half-integer.
\item Finally, outside of some trivial cases, taking $\varepsilon$ sufficiently small guarantees that the maximum is attained only near half-integers of a uniformly bounded size.
\end{enumerate}

We see that (i) is trivially satisfied if $I(\varepsilon)=\|\widehat{u_0} \cdot |\xi|^2\|_{L^{\infty}}=1/\pi^2$: indeed, the maximum magnitude is achieved whenever $\xi$ is a half-integer (because $\widehat{f}$ is nonnegative).  Suppose now that $I(\varepsilon)>\|\widehat{u_0} \cdot |\xi|^2\|_{L^{\infty}}$ strictly.  Since $f$ is $C^3$, we know that $\widehat{f}$ decays at least as quickly as $|\xi|^{-3}$.  In particular, $\widehat{f}(\xi)\cdot |\xi|^2=\mathcal{O}(1/|\xi|)$, and this quantity is smaller than $(I(\varepsilon)-\|\widehat{u_0} \cdot |\xi|^2\|_{L^{\infty}})/2$ for $|\xi|$ sufficiently large.  Using the Triangle Inequality, we conclude that $(\widehat{u_0}+\varepsilon\widehat{f})\cdot |\xi|^2$ approaches its supremum only within some bounded closed interval and hence that the supremum is actually attained (by compactness), as desired.\\

Next, we show that $(\widehat{u_0} + \varepsilon \widehat{f}) \cdot |\xi|^2$ can attain its maximum magnitude only close to where $\widehat{u_0} \cdot |\xi|^2=\sin(\pi \xi)^2/\pi^2$ attains its maximum magnitude, i.e., at half-integers.  Set $ \xi = n + 1/2+ \delta$, with $n \in \mathbb{Z}$ and $|\delta|\leq 1/2$.  The standard bound $\sin(\pi \xi)^2 \leq 1-4\delta^2$ gives the inequality
$$ (\widehat{u_0}(\xi) + \varepsilon \widehat{f}(\xi)) \cdot |\xi|^2 \leq \frac{1}{\pi^2} - \frac{4\delta^2}{\pi^2} + \varepsilon |\xi|^2 \cdot \widehat{f}(\xi).$$
Since $\widehat{f}(\xi)\cdot |\xi|^2=\mathcal{O}(1/|\xi|)$, we have a uniform bound $\| \widehat{f}(\xi) \cdot |\xi|^2 \|_{L^{\infty}}=K<\infty$.  Putting these two facts together shows that
$$(\widehat{u_0}(\xi) + \varepsilon \widehat{f}(\xi)) \cdot |\xi|^2 < \frac{1}{\pi^2} \leq I(\varepsilon)$$
unless
$$|\delta| \leq \frac{\pi \sqrt{K \varepsilon}}{2},$$
which establishes (ii).  Henceforth, we restrict our attention to $\delta$ in this range.  We can say even more: since $f$ is compactly supported, $\widehat{f}$ has derivatives of all order, and each such derivative is uniformly bounded.  Expanding $\varepsilon \widehat{f}$ around $\xi=n+1/2$ gives
\begin{equation}\label{eq:bound-on-f}
\varepsilon \widehat{f}(\xi)\leq \varepsilon \widehat{f}\left(n+\frac{1}{2}\right)+\varepsilon |\delta| \cdot \left\|\frac{d}{d\xi}  \widehat{f} \right\|_{L^{\infty}}=\varepsilon \widehat{f}\left(n+\frac{1}{2}\right)+\mathcal{O}(\varepsilon^{3/2}),
\end{equation}
where we used the estimate $\delta =\mathcal{O}(\sqrt{\varepsilon})$.
We now distinguish two possibilities for the values of $\widehat{f}$ at the points $n+1/2$.
First, suppose $\widehat{f}(n+1/2)=0$ for all $n \in \mathbb{Z}$.  Using the fact that all derivatives decay at least as quickly as $|\xi|^{-3}$ (and, in particular, faster than $|\xi|^{-2}$), we see that
$$ \| (\widehat{u_0} + \varepsilon \widehat{f}) \cdot |\xi|^2 \|^{}_{L^{\infty}} = \frac{1}{\pi^2} + o(\varepsilon),$$
which certainly satisfies Equation~\eqref{eq:perturb}.
Henceforth, we restrict our attention to the case where $\widehat{f}(n+1/2)$ is not uniformly $0$ for $n \in \mathbb{Z}$.  Choose some $m$ such that $\widehat{f}(m+1/2)>0$; it follows that $I(\varepsilon)\geq 1/\pi^2+(\widehat{f}(m+1/2))\varepsilon$ grows at least linearly.  Following the discussion in (i), we see that $(\widehat{u_0}+\varepsilon\widehat{f})\cdot |\xi|^2<I(\varepsilon)$ outside of some bounded interval that is independent of the choice of $\varepsilon$.  This establishes (iii).\\

The ``easy'' half of the inequality \eqref{eq:perturb} is
$$ I(\varepsilon) \geq \frac{1}{\pi^2} + \varepsilon  \sup_{n \in \mathbb{Z}}{ \widehat{f}\left(n+\frac{1}{2}\right)\cdot \left|n+\frac{1}{2}\right|^2}.$$
Combining the observation from (iii) with Equation~\eqref{eq:bound-on-f} gives that
$$ I(\varepsilon) \leq \frac{1}{\pi^2} + \varepsilon  \sup_{n \in \mathbb{Z}}{ \widehat{f}\left(n+\frac{1}{2}\right)\cdot \left|n+\frac{1}{2}\right|^2} + \mathcal{O}(\varepsilon^{3/2}),$$
whence we deduce \eqref{eq:perturb}.  Squaring gives $$I(\varepsilon)^2=\frac{1}{\pi^4}+\frac{2\varepsilon}{\pi^2}\sup_{n \in \mathbb{Z}}{ \widehat{f}\left(n+\frac{1}{2}\right)\cdot \left|n+\frac{1}{2}\right|^2}+o(\varepsilon).$$
The other two terms in the statement of the theorem are easy to linearize.  Since $u_0$ is nonnegative, we have
\begin{align*}
 \| (u_0+\varepsilon f) \cdot |x|^{2}\|_{L^1}  &= \int_{-1}^{1}{(1-|x|) |x|^{2} \,dx} + \varepsilon  \int_{-1}^{1}{f(x) \cdot |x|^{2} \,dx} + \mathcal{O}(\varepsilon^2) \\
 &=\frac{1}{6} + \varepsilon  \int_{-1}^{1}{f(x) \cdot |x|^{2} \,dx} + \mathcal{O}(\varepsilon^2) 
 \end{align*}
and thus
$$ \| (u_0+\varepsilon f) \cdot |x|^{2}\|^2_{L^1}  = \frac{1}{36} + \frac{\varepsilon}{3}  \int_{-1}^{1}{f(x) \cdot |x|^{2} \,dx} + \mathcal{O}(\varepsilon^2).$$
As for the last term, it is easy to see that
$$  \|u_0+ \varepsilon f\|_{L^1}^{4} = \left(1 + \varepsilon \int_{-1}^{1}{f(x) dx} \right)^{4} = 1 + 4 \varepsilon \int_{-1}^{1}{f(x) dx} + \mathcal{O}(\varepsilon^2).$$ 
Collecting all of the terms, we see that
$$J_f(\varepsilon)=\frac{1}{36\pi^4}+c_f \varepsilon+o(\varepsilon),$$
where
$$c_f=\frac{1}{3 \pi^4} \int_{-1}^{1} f(x) \cdot |x|^2 \,dx+\frac{1}{18 \pi^2}\sup_{n \in \mathbb{Z}}{ \widehat{f}\left(n+\frac{1}{2}\right)\cdot \left|n+\frac{1}{2}\right|^2}-\frac{1}{9 \pi^4} \int_{-1}^1 f(x) \, dx.$$
Regrouping, we find the statement $c_f>0$ to be equivalent to the inequality
$$\sup_{n \in \mathbb{Z}}{ \widehat{f}\left(n+\frac{1}{2}\right)\cdot \left|n+\frac{1}{2}\right|^2}> \frac{ 2}{ \pi^2} \int_{-1}^{1}{f(x)(1-3x^2) \,dx},$$
which is the content of Proposition~\ref{prop:hypergeo}.
\end{proof}

Finally, we complete the argument by proving Proposition~\ref{prop:hypergeo}.

\begin{proof}[Proof of Proposition~\ref{prop:hypergeo}]
We can use the Plancherel Identity to obtain
$$ \int_{-1}^{1}{f(x)(1-3x^2) \,dx} = \frac{1}{2}  \sum_{j \in \mathbb{Z}/2}{ a_j \widehat{f}(j)},$$
where
$$ a_j = \int_{-1}^{1}{(1-3x^2)\cos{(2\pi j x)} \,dx}$$
and $\mathbb{Z}/2$ denotes the set of integers and half integers.  We observe first that $a_0 = 0$.  For any nonzero integer $k$, we have
$$ a_k = - \frac{3}{k^2 \pi^2} < 0.$$
Finally, for any integer $k$, we have
$$ a_{k+1/2} = \frac{12}{(2k+1)^2 \pi^2} > 0.$$
Let us introduce the parameter
$$ \gamma =  \sup_{n \in \mathbb{Z}}{ \widehat{f}\left(n+\frac{1}{2}\right)\cdot \left|n+\frac{1}{2}\right|^2},$$
so that
$$ \widehat{f}\left(n+\frac{1}{2}\right) \leq \frac{\gamma}{|n+\frac12|^2}.$$
Since $\widehat{f}(j) \geq 0$ for all $j$, we can argue that
\begin{align*}
 \int_{-1}^{1}{f(x)(1-3x^2) \,dx} &=\frac{1}{2} \left[ \sum_{k \in \mathbb{Z}}{ a_k \widehat{f}(k)} + \sum_{k \in \mathbb{Z}}{ a_{k+1/2} \widehat{f}\left(k+ \frac{1}{2}\right)} \right]\\
 &\leq  \frac{1}{2}  \sum_{k \in \mathbb{Z}}{ a_{k+1/2} \widehat{f}\left(k+ \frac{1}{2}\right)}\\
 &\leq \frac{\gamma}{2} \sum_{k \in \mathbb{Z} }{ \frac{12}{(2k+1)^2 \pi^2} \cdot \frac{1}{(k+\frac12)^2}} \\
 &=\frac{24 \gamma}{\pi^2} \sum_{k \in \mathbb{Z} }{ \frac{1}{(2k+1)^4 }} = \gamma \frac{\pi^2}{2}.
\end{align*}
It remains to characterize cases of equality.  Suppose that both of the inequalities in the above calculation are equalities.  From the first, we see that $\widehat{f}(n)=0$ for all $n \in \mathbb{Z} \setminus\{0\}$.  From the second, we see that
$$ \widehat{f}\left(n+\frac{1}{2}\right) = \frac{\gamma}{|n+\frac12|^2}$$
for all $n \in \mathbb{Z}$.  Note that we have not yet said anything about the ``constant value'' $\widehat{f}(0)$ (essentially because $a_0=0$ obscures any such information).  Since our function $f$ is compactly supported on $[-1,1]$ by assumption, it is completely determined by the values of its Fourier transform at the integers and half-integers.  Since the value of $\widehat{f}(0)$ just determines the mean value of $f$, our information suffices to determine $f$ up to an additive shift.  So we see that $$f(x)=c(1-|x|)\chi_{[-1,1]}(x)+d$$ for some $c,d \in \mathbb{R}$ (where $c$ depends on $\gamma$, and $d$ depends on both $\gamma$ and $\widehat{f}(0)$).  Since $f$ is supported on $[-1,1]$ by assumption, we must have $d=0$.  Finally, we note that $c \geq 0$ because of the condition that $\widehat{f}$ is nonnegative. 
\end{proof}

\section*{Acknowledgements}
S.S. is supported by the NSF (grant DMS-1763179) and the Alfred P. Sloan Foundation.

\end{document}